\documentclass[10pt]{article}
\usepackage{tocloft}

\usepackage{latexsym,amsfonts,euscript,amsthm}

\usepackage{amssymb}
\usepackage{stmaryrd}
\usepackage{mathrsfs,amsmath}
\usepackage{leftidx}
\usepackage{mathabx}

\usepackage[shortlabels]{enumitem}

\usepackage[figuresright]{rotating}
\usepackage[table]{xcolor}
\usepackage{multirow}
\usepackage{booktabs}
\usepackage{tabularx}

\usepackage{tocbibind} 

\usepackage{float}
\usepackage{tikz-cd}
\usetikzlibrary{cd}

\usepackage[pagebackref]{hyperref}

\hypersetup{%
  colorlinks=true,
  linktoc=all,
  citebordercolor=green,
  linkbordercolor=red,
 citecolor=black,
    linkcolor=blue,
  urlcolor=blue
}

\makeatletter
\renewcommand{\@maketitle}{%
  \newpage
  \null
  \vskip 2em%
  \begin{center}%
    \let \footnote \thanks
    {\LARGE \bfseries \@title \par}
    \vskip 1.5em%
    {\large \textsc{\@author}\par}%
    \vskip 1em%
    {\large \@date \par}%
  \end{center}%
  \par
  \vskip 1.5em
}
\makeatother

\renewcommand{\contentsname}{\small\bfseries Contents}

\newcommand{\centeredtitle}[1]{%
  \begin{center}
    {\small\bfseries #1}%
  \end{center}
  \vspace{1ex}%
}

\makeatletter
\AtBeginDocument{%
  \renewcommand{\tableofcontents}{%
    \centeredtitle{\contentsname}%
    \@starttoc{toc}%
  }%
}
\makeatother

\renewenvironment{abstract}{%
  \centeredtitle{\abstractname}%
  \vspace{-0.6\baselineskip}
  \quotation
}{%
  \endquotation
}

\newcommand{\ackname}{Acknowledgements}
\makeatletter
\if@titlepage
  \newenvironment{acknowledgement}{%
    \titlepage
    \null\vfil
    \@beginparpenalty\@lowpenalty
    \begin{center}%
      \bfseries \ackname
      \@endparpenalty\@M
    \end{center}}%
  {\par\vfil\null\endtitlepage}
\else
  \newenvironment{acknowledgement}{%
    \if@twocolumn
      \section*{\ackname}%
    \else
      \small
      \begin{center}%
        {\bfseries \ackname\vspace{-0.5em}\vspace{\z@}}%
      \end{center}%
      \quotation
    \fi}
    {\if@twocolumn\else\endquotation\fi}
\fi
\makeatother

\makeatletter 
\@addtoreset{equation}{section}
\makeatother  

\makeatletter
\def\thanks#1{\g@addto@macro\@thanks{\footnotetext{#1}}}
\makeatother

\makeatletter
\newtoks\address@list
\newcommand{\addaddress}[3]{%
  \address@list=\expandafter{\the\address@list
    \textsc{#1, #2.} \par
    \textit{Email address}: \href{mailto:#3}{\textsf{#3}} \par
    \addvspace{\medskipamount}%
  }%
}
\newcommand{\printaddresses}{%
  \AtEndDocument{\bigskip{\footnotesize
    \the\address@list
  }}%
}
\makeatother

\newtheorem{lem}{\bf Lemma}[section]

\newtheorem{mainthm}{Theorem}

\newcommand{\PSL}{{\operatorname{PSL}}}

\title{Hall complement numbers
\thanks{\textbf{Keywords}\,\, Finite groups, Hall subgroups, Hall complement numbers.\\
\textbf{2020 MR Subject Classification}\,\, Primary 20D20\\
The second author is supported by Natural Science Foundation of Shanghai
(24ZR1422800) and National Natural Science Foundation of China (12471018).
}}

\author{\textsc{Yu Zeng}, \textsc{Hangyang Meng}
*\thanks{*Corresponding author.}\\
}
\addaddress{Yu Zeng}{Department of Mathematics, Suzhou University of Technology, No.99 South Third Ring Road, Changshu, Jiangsu, 215500, China}{yuzeng2004@163.com}
\addaddress{Hangyang Meng}{Department of Mathematics
	and Newtouch Center for Mathematics of Shanghai University, Shanghai University, Shanghai, 200444, China}{hymeng2009@shu.edu.cn}
\printaddresses

\date{}

\begin{document}

\maketitle

\begin{abstract}
A positive integer $m$ is termed a \emph{Hall number} if every finite group $G$ whose order is precisely divisible by $m$ possesses a Hall subgroup of order $m$.
Seeking generalizations of Sylow's theorem and Hall's theorem for finite solvable groups, Jiping Zhang asked for a full classification of Hall numbers, a problem recently solved by Guo, Hu and Li. 
Inspired by Zhang's problem, Guohua Qian put forward an analogous problem on a full
classification of 
Hall complement numbers.
Recall that a positive integer $m$ is called a \emph{Hall complement number} provided that every finite group $G$ 
with $m$ precisely dividing $|G|$ admits a Hall subgroup of order $|G|/m$. In the present paper, we prove that every Hall complement number is either $1$ or of the form $4k+2$ for some non-negative integer $k$, thus answering Qian's problem.
\end{abstract}

\section{Introduction}

A positive divisor $m$ of a positive integer $n$ is called a \emph{Hall divisor} if $n$ is precisely divisible by $m$, i.e. $n/m$ is an integer such that $\gcd(m, n/m) = 1$.

A positive integer $m$ is termed a \emph{Hall number} if every finite group $G$ for which $m$ is a Hall divisor of $|G|$ possesses a Hall subgroup of order $m$. 
With the aim of extending Sylow's theorem and Hall's theorem for finite solvable groups, Jiping Zhang raised the problem of classifying all Hall numbers. 
By Sylow's theorem, every prime power is a Hall number. 
In addition, Cayley's normal $p$-complement theorem and Hall's theorem for finite solvable groups imply that any integer of the form $4k+2$, for some non-negative integer $k$, is a Hall number. 
Recently, Guo, Hu, and Li completed the full classification by proving that the remaining Hall numbers are exactly $12$, $24$, and $60$, thus completely resolving Zhang's problem.

Inspired by Zhang's problem, Guohua Qian propose a closely related problem: the full classification of Hall complement numbers. 
Throughout this paper, a positive integer $m$ is called a \emph{Hall complement number} if every finite group $G$ for which $m$ is a Hall divisor of $|G|$ admits a Hall subgroup of order $|G|/m$. 
In our main theorem, we give a complete solution to Qian's problem.

\begin{mainthm}\label{thmA}
  A positive integer $m$ is a Hall complement number if and only if 
  either $m=1$ or $m=4k+2$ for some non-negative integer $k$.
\end{mainthm}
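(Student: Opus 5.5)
The ``if'' direction is elementary; the ``only if'' direction needs, for each excluded $m$, a group of suitable order with no Hall subgroup of order $|G|/m$.

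\textbf{If.} The case $m=1$ is trivial, since $G$ itself is the required Hall subgroup. Suppose $m=4k+2$ and $m$ is a Hall divisor of $|G|$, and put $\pi=\pi(m)$. Then $2\in\pi$ and $|G|_2=2$, so a Sylow $2$-subgroup of $G$ is cyclic of order $2$. Cayley's normal $p$-complement theorem then gives a normal subgroup $N$ of index $2$ and odd order. By Feit--Thompson, $N$ is solvable. Since $|G|/m$ is odd, it is a Hall divisor of $|N|$. Hall's theorem for solvable groups therefore gives a Hall $\pi'$-subgroup of $N$ of order $|G|/m$, which is also a Hall subgroup of $G$.

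\textbf{Only if.} Suppose $m>1$ and $m\not\equiv 2 \pmod 4$. Then either $m$ is odd, or $4\mid m$; in both cases fix a prime $p\mid m$ with $p^a\,\|\,m$, where $a\ge 1$ if $p$ is odd and $a\ge 2$ if $p=2$. The reduction is as follows.

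1. Find a nonabelian simple group $S$ such that $|S|_p=p^b$ with $b\le a$, and $S$ has no subgroup of index $p^b$.
2. Require also $\gcd(|S|,m)=p^b$, so that the other primes of $m$ do not divide $|S|$.
3. Set $G=S\times C_{m/p^b}$. Then $|G|=|S|\cdot m/p^b$, and its $\pi$-part is exactly $m$, so $m$ is a Hall divisor of $|G|$.
4. Suppose $H\le G$ is a Hall $\pi'$-subgroup. Its intersection with $S$ is a Hall $\pi'$-subgroup of the normal subgroup $S$. Because $\pi(|S|)\cap\pi=\{p\}$, this intersection has index $p^b$ in $S$, contradicting the choice of $S$.

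So everything reduces to producing such an $S$ for each $p$ and $a$.

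For the subgroup condition I plan to rely on Guralnick's classification of simple groups with a subgroup of prime-power index. It says the exceptions are essentially $A_n$ acting on $n=p^c$ points, $\PSL(n,q)$ on projective points, $\PSL(2,11)$ of index $11$, $M_{23}$, $M_{11}$, and $\mathrm{PSU}(4,2)$ of index $27$. Provided $p^b$ is not among these indices, the subgroup condition is automatic. Two candidate families:

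\emph{Case $p=2$, $a\ge 2$.} Take $S=\PSL(2,q)$ with $q\equiv \pm 3\pmod 8$, so that $|S|_2=4\le 2^a$. A subgroup of index $4$ would embed $S$ in $S_4$, which is impossible. The template is $A_5$ for $m=4$.

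\emph{Case $p$ odd.} Take $S=\PSL(2,2^f)$ with $p\mid 2^f+1$, or more flexibly $\PSL(2,r)$ with $r$ a prime, with $p^b\,\|\,|S|$ and $p^b$ not of the form $(q^n-1)/(q-1)$.

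The main obstacle is the arithmetic in step 2 when $m$ has many prime divisors. One must choose $S$ so that $|S|$ avoids every prime of $m$ other than $p$, while $|S|_p$ stays at most $p^a$ and avoids Guralnick's exceptional indices. For $\PSL(2,r)$ with $r$ prime, $|S|=r(r^2-1)/2$. I would first try Dirichlet's theorem on primes in progressions, choosing $r$ modulo $8p^{a+1}\prod_{\ell\in\pi\setminus\{p\}}\ell$ to control $|S|_p$ and $|S|_2$. Excluding the other primes $\ell\mid m$ cannot be done this way, since the factor $\ell=3$ always divides $r^2-1$ for $r>3$. If $3\mid m$ with $p\ne 3$, I would therefore switch to a Suzuki group $Sz(2^{2n+1})$, whose order is prime to $3$, and again use Zsigmondy-type control on the remaining prime factors. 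I would also verify whether simply taking $S$ with a large enough power of $p$ and then replacing the cyclic factor by a direct product with another simple group can absorb the conflicting primes.
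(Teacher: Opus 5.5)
Your ``if'' direction is fine and matches the paper. The ``only if'' direction has a genuine gap. You reduce everything to finding a nonabelian simple $S$ with $\gcd(|S|,m)=p^b$ for a single prime $p\mid m$ and with no subgroup of index $p^b$. You never construct such an $S$, and for some excluded $m$ none exists.

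Take $m=12$. Since $|S|$ is even and $2\mid m$, you are forced to take $p=2$, so you need $3\nmid|S|$ and $|S|_2\le 4$. By Feit--Thompson and Cayley's theorem, $|S|_2=4$. Let $P$ be a Sylow $2$-subgroup of $S$. Burnside's transfer theorem gives $N_S(P)\neq C_S(P)$, so $N_S(P)/C_S(P)$ is a nontrivial odd-order subgroup of $\operatorname{Aut}(P)$. This forces $P\cong C_2\times C_2$ and $3\mid |S|$.

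More generally, simple groups of order prime to $3$ are Suzuki groups, whose $2$-part is at least $2^6$ and whose order is divisible by $5$. So your scheme also breaks for every $m$ divisible by $60$, for example. Changing the ambient group does not rescue step~4 either. For $m=12$, any normal $N$ whose order meets $\{2,3\}$ in a single prime and has $|N|_2\le 4$ is solvable by the same argument (or by Feit--Thompson), so it does have a subgroup of the required index.

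The missing idea is to stop isolating one prime. Instead, use a simple group in which all of $m$ is a Hall divisor, and exclude a subgroup of order $|S|/m$ by subgroup structure rather than by a prime-power-index argument.

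The paper takes $S=\PSL_2(r)$ with $r>11$ prime and $m$ a Hall divisor of $r-1$. Such $r$ exist by Dirichlet, applied to a residue $\equiv 1+p_i^{a_i}\pmod{p_i^{a_i+1}}$ for each $p_i^{a_i}\,\|\,m$. Then $m$ is a Hall divisor of $|S|=r(r^2-1)/2$, using that $m$ is odd or $4\mid m$.

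A subgroup of order $|S|/m$ is proper and contains a Sylow $r$-subgroup. By Dickson it lies in a Borel subgroup of order $r(r-1)/2$, which gives $r+1\mid m$ and contradicts $m\le r-1$. For $m=12$ this is just $\PSL_2(13)$, which has no subgroup of order $91$.
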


Throughout the rest of this paper, we only consider finite groups.

\section{Proofs}

We follow \cite{huppert67} for standard notation and terminology in finite group theory. Throughout the paper, $p$ always denotes a prime. 
Other notation will be recalled or defined when necessary.

We begin by verifying that every positive integer of the form $4k+2$ is a Hall complement number.

\begin{lem}\label{lem: 4k+2}
 For every non-negative integer $k$,
  $m=4k+2$ is a Hall complement number.
\end{lem}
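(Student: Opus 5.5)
Write $m=2n$ with $n=2k+1$ odd, and let $G$ be a group in which $m$ is a Hall divisor of $|G|$. Then $|G|=2n\cdot r$ with $\gcd(2n,r)=1$, so $r$ is odd and $|G|_2=2$. We need a subgroup of order $r=|G|/m$. The plan is to reduce to a solvable section by using the normal $2$-complement, and then apply Hall's theorem to the odd part of $m$.

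\textbf{Step 1 (normal $2$-complement).} A Sylow $2$-subgroup $P$ of $G$ has order $2$. It is therefore cyclic, hence abelian, so $N_G(P)=C_G(P)$. By Burnside's (Cayley's) normal $p$-complement theorem, $G$ has a normal subgroup $K$ with $|K|=|G|/2=nr$.

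\textbf{Step 2 (Hall subgroup inside $K$).} Inside $K$ the number $n$ is a Hall divisor of $|K|=nr$, and we need a subgroup of $K$ of order $r$. Since $|K|$ is odd, the Feit--Thompson theorem shows that $K$ is solvable. Hall's theorem for solvable groups then gives a Hall subgroup $H\le K$ of order $r$, the complementary Hall divisor of $n$. As $|H|=r=|G|/m$, this $H$ is the required Hall subgroup of $G$.

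The case $n=1$ (so $m=2$) needs no solvability: $K$ itself has order $r$. The main obstacle is Step 2 when $n>1$, because it depends on Feit--Thompson. I do not see a way around this, since solvability of $K$ is not otherwise available and non-solvable groups may lack Hall subgroups. The introduction of the paper cites Hall's theorem for solvable groups in the analogous Hall-number statement, so invoking odd-order solvability should be the intended route. Step 1 itself is routine.
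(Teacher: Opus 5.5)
Your proof is correct and follows essentially the same route as the paper: you obtain a normal $2$-complement $K$ via Cayley/Burnside, get solvability of $K$ from Feit--Thompson, and then apply Hall's theorem. The only cosmetic difference is that the paper first deduces that $G$ itself is solvable and applies Hall's theorem in $G$, whereas you apply it directly inside $K$ to the Hall divisor $n$ of $|K|$, which is equally valid.
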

\begin{proof}
Let $G$ be a finite group such that $m$ is a Hall divisor of $|G|$.
Since $m = 4k+2 = 2(2k+1)$, the exact power of $2$ dividing $m$ is $2$.
Because $m$ is a Hall divisor of $|G|$, the exact power of $2$ dividing $|G|$ is also $2$.
Thus, $G$ has a Sylow $2$-subgroup of order $2$.
By Cayley's normal $p$-complement theorem (see \cite[IV, 2.8 Satz]{huppert67}), $G$ has a normal $2$-complement $K$ of index $2$.
Since $K$ has odd order, $K$ is a solvable group by the Feit--Thompson theorem.
Because $G/K$ and $K$ are both solvable, $G$ itself is a solvable group.
Since $m$ is a Hall divisor of $|G|$, the integer $|G|/m$ is coprime to $m$.
By Hall's theorem, a solvable group contains a Hall subgroup of any permissible valid order.
In particular, $G$ must have a Hall subgroup of order $|G|/m$.
\end{proof}

The following well-known result is Dirichlet's theorem on arithmetic progressions, 
see, for instance, \cite[II, Theorem 15]{hardy08}.

\begin{lem}[Dirichlet, 1837]\label{lem-Dirichlet}
Let $a$ and $d$ be coprime integers, i.e., $\gcd(a,d)=1$. Then there exist infinitely many prime numbers $p$ such that $p \equiv a \pmod{d}.$
\end{lem}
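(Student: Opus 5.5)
The plan is to follow Dirichlet's classical analytic argument rather than anything group-theoretic. We may assume $d\ge 1$, since replacing $d$ by $|d|$ does not change the congruence; the case $d=1$ is Euclid's theorem. The goal is to show that
\[
\sum_{p\equiv a \ (\mathrm{mod}\ d)} \frac{1}{p^{s}} \longrightarrow \infty \quad\text{as } s\to 1^{+},
\]
with the sum running over primes. This forces infinitely many such primes, because a finite sum stays bounded as $s\to 1^+$.

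\textbf{Isolating the progression.} Use the Dirichlet characters $\chi$ modulo $d$, that is, the homomorphisms $(\mathbb{Z}/d\mathbb{Z})^\times\to\mathbb{C}^\times$ extended by $0$ to non-units. They satisfy the orthogonality relation
\[
\frac{1}{\varphi(d)}\sum_{\chi}\overline{\chi}(a)\chi(n)=
\begin{cases}1 & \text{if } n\equiv a \pmod d,\\ 0 & \text{otherwise,}\end{cases}
\]
which is valid because $\gcd(a,d)=1$. This relation follows from the character theory of the finite abelian group $(\mathbb{Z}/d\mathbb{Z})^\times$.

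For each $\chi$, form $L(s,\chi)=\sum_{n\ge1}\chi(n)n^{-s}$ for $s>1$. Complete multiplicativity gives the Euler product $L(s,\chi)=\prod_p(1-\chi(p)p^{-s})^{-1}$. Taking logarithms gives
\[
\log L(s,\chi)=\sum_p \chi(p)p^{-s}+O(1)
\]
uniformly for $s>1$, since the terms with prime powers $p^k$, $k\ge2$, converge absolutely. Combining this with orthogonality yields
\[
\sum_{p\equiv a\ (\mathrm{mod}\ d)} p^{-s}=\frac{1}{\varphi(d)}\sum_\chi \overline{\chi}(a)\log L(s,\chi)+O(1).
\]

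\textbf{Comparing the terms.} For the principal character $\chi_0$, $L(s,\chi_0)$ equals $\zeta(s)$ times finitely many factors $(1-p^{-s})$ with $p\mid d$. Hence $\log L(s,\chi_0)\sim\log\frac{1}{s-1}\to\infty$ as $s\to 1^+$.

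For $\chi\ne\chi_0$, the partial sums $\sum_{n\le N}\chi(n)$ are bounded, by orthogonality over each full period. Abel summation then shows that $L(s,\chi)$ converges for $s>0$ and is continuous at $s=1$. It therefore suffices to prove $L(1,\chi)\neq0$ for every $\chi\ne\chi_0$. Then each $\log L(s,\chi)$ with $\chi\neq\chi_0$ stays bounded near $s=1$, and the principal term dominates.

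\textbf{Main obstacle: the nonvanishing $L(1,\chi)\ne 0$.} Consider $F(s)=\prod_\chi L(s,\chi)$. For $s>1$, its logarithm has the Dirichlet series
\[
\log F(s)=\varphi(d)\sum_{p^k\equiv1\ (\mathrm{mod}\ d)} \frac{1}{k\,p^{ks}},
\]
so $\log F(s)\ge 0$ and hence $F(s)\ge1$.

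For complex $\chi$ (that is, $\chi\ne\overline{\chi}$), suppose $L(1,\chi)=0$. Then also $L(1,\overline{\chi})=0$, giving two zero factors against only the simple pole of $L(s,\chi_0)$. This would force $F(s)\to0$ as $s\to1^+$, a contradiction.

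For real quadratic $\chi$, this argument fails because a single zero could cancel the pole. Here I would use the standard auxiliary series $\sum_n r(n)n^{-s}$ with $r=1*\chi$. The function $r$ is nonnegative, and $r(n^2)\ge1$. If $L(1,\chi)=0$, then $\zeta(s)L(s,\chi)$ would be holomorphic for $\operatorname{Re}s>0$. Landau's theorem on Dirichlet series with nonnegative coefficients would then make $\sum_n r(n)n^{-s}$ converge for all $s>0$. But at $s=1/2$ the series diverges, since it dominates $\sum_m r(m^2)m^{-1}\ge\sum_m m^{-1}$. This is a contradiction.

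Alternatively, one can bound $\sum_{n\le x} r(n)/\sqrt n$ directly from above and below by elementary estimates, which avoids analytic continuation. This real-character step is where I expect the real work to lie.
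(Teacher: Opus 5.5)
The paper gives no proof of this lemma. It quotes Dirichlet's theorem with a reference to Hardy--Wright and uses it as a black box, namely to produce primes in the class $A \bmod M$ in the proof of Lemma~\ref{lem:Hall-divisor}. You instead reconstruct the standard analytic proof, and your outline is correct:
\begin{itemize}
\item orthogonality of characters isolates the progression; this is where $\gcd(a,d)=1$ enters, and it also gives $\overline{\chi_0}(a)=1$;
\item the Euler product reduces the problem to the behaviour of $\log L(s,\chi)$ as $s\to 1^+$;
\item nonvanishing of $L(1,\chi)$ is handled for complex $\chi$ through $\prod_\chi L(s,\chi)\ge 1$, and for real $\chi$ through Landau's theorem applied to $\zeta(s)L(s,\chi)=\sum_n (1*\chi)(n)n^{-s}$, or through the elementary estimate of $\sum_{n\le x} r(n)/\sqrt{n}$.
\end{itemize}
As for the trade-off, the citation keeps the paper short and focused on the group theory, where its contribution lies. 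Your route makes the argument self-contained, but it needs analytic machinery (continuation of $\zeta$ to $\operatorname{Re}s>0$ and Landau's theorem, or the elementary $r(n)$ estimates) that is far heavier than anything else in the paper.

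Two steps need slightly more than you state.
\begin{itemize}
\item In the complex-character case, continuity of $L(s,\chi)$ at $s=1$ is not enough. From $L(1,\chi)=L(1,\overline{\chi})=0$ and continuity alone you only get $F(s)=o(1)/(s-1)$, which need not tend to $0$. You need $L(s,\chi)=O(s-1)$. This is easy to get: Abel summation gives locally uniform convergence on $\operatorname{Re}s>0$, so $L(s,\chi)$ is holomorphic there (you use this anyway in the real case). Then $L(1,\chi)=0$ gives $L(s,\chi)=(s-1)g(s)$ with $g$ holomorphic, so $F(s)=O(s-1)$, contradicting $F(s)\ge 1$.
\item To see that $\log L(s,\chi)$ stays bounded near $s=1$, use that the series-defined logarithm is a continuous branch on $(1,\infty)$. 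Since $L(s,\chi)$ stays close to $L(1,\chi)\neq 0$, this branch differs from a fixed local branch by a constant.
\end{itemize}
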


The next key lemma is from \cite[Lemma 4]{guo25} due to Guo, Hu and Li.  
Here we will give an alternative proof using Dirichlet's theorem on arithmetic progressions. 

\begin{lem}\label{lem:Hall-divisor}
Let $m \ge 2$ be a positive integer and let $\xi$ be an integer such that $\gcd(\xi, m) = 1$. Then there exist infinitely many primes $p$ such that $p \equiv \xi \pmod{m}$ and
	\[
	\gcd\left(m, \frac{p-\xi}{m}\right) = 1.
	\]
\end{lem}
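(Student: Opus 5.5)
The plan is to apply Dirichlet's theorem (Lemma~\ref{lem-Dirichlet}) with modulus $m^2$ rather than $m$. Write $p=\xi+mt$. The condition $\gcd(m,t)=1$ is a condition on the residue of $t$ modulo the primes dividing $m$. Since $t$ is only relevant modulo $m$, this becomes a condition on $p$ modulo $m^2$. So it suffices to find one residue class $a \pmod{m^2}$ with the following three properties: $\gcd(a,m^2)=1$, $a\equiv \xi \pmod m$, and $\gcd\bigl(m,(a-\xi)/m\bigr)=1$. Lemma~\ref{lem-Dirichlet} will then supply infinitely many primes in that class.

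The natural choice is $a=\xi+m$, that is, $t=1$. Then $(a-\xi)/m=1$, which is trivially coprime to $m$. Also $a\equiv \xi\pmod m$, and $\gcd(a,m)=\gcd(\xi,m)=1$. Since $m$ and $m^2$ have the same prime divisors, this gives $\gcd(a,m^2)=1$. Applying Lemma~\ref{lem-Dirichlet} with $d=m^2$ yields infinitely many primes $p\equiv \xi+m\pmod{m^2}$.

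It remains to check that every such prime works. Write $p=\xi+m+m^2 s$ with $s\in\mathbb{Z}$. Then $p\equiv\xi\pmod m$, and
\[
\frac{p-\xi}{m}=1+ms,
\]
which is congruent to $1$ modulo every prime dividing $m$. Hence $\gcd\bigl(m,(p-\xi)/m\bigr)=1$.

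There is essentially no real obstacle. The one point that needs care is that $\xi$ may be negative or larger than $m$. This does not matter, because Lemma~\ref{lem-Dirichlet} is stated for arbitrary coprime integers, and only finitely many primes could give a nonpositive quotient, so discarding them leaves infinitely many. The key idea is simply to lift the congruence from modulus $m$ to modulus $m^2$.
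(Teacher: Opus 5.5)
Your proof is correct and uses essentially the same idea as the paper: apply Dirichlet's theorem to a residue class modulo a finer modulus, chosen so that $p-\xi$ is divisible by $m$ with a cofactor prime to $m$. The paper builds this class with the Chinese Remainder Theorem modulo $\prod_i p_i^{a_i+1}$, forcing $p-\xi\equiv p_i^{a_i}\pmod{p_i^{a_i+1}}$ for each $i$, whereas your explicit choice $\xi+m \pmod{m^2}$ does the same job without CRT and is, if anything, cleaner.
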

\begin{proof}
Let $m=p_1^{a_1}p_2^{a_2}\cdots p_t^{a_t}$, where $p_1< p_2<\cdots <p_t$ are primes, $t\geq 1$, and $a_i>0$.
Set $M=\prod_{i=1}^{t} p_i^{a_i+1}$.
By the Chinese Remainder Theorem, since the moduli $p_i^{a_i+1}$ are pairwise coprime for each $1 \leq i \leq t$, there exists a positive integer $A$ such that 
$$A \equiv \xi+p_i^{a_i} \pmod{p_i^{a_i+1}}$$
for $1\leq i\leq t$. Note that for each $i$, $A \equiv \xi+p_i^{a_i} \equiv \xi \pmod{p_i}$.
Since $\gcd(\xi,m)=1$, it implies that $\gcd(A, p_i) = 1$ for each $i$, and consequently, $\gcd(A, M)=1$.
By Lemma~\ref{lem-Dirichlet}, there exist infinitely many primes $p$ such that $p\equiv A\pmod M$. For such primes $p$, we have
$p \equiv A \equiv \xi+p_i^{a_i} \equiv \xi \pmod{p_i^{a_i}}$ for each $i$. Hence $m=p_1^{a_1}p_2^{a_2}\cdots p_t^{a_t}$ divides $p-\xi$, that is, $p \equiv \xi \pmod{m}$.

We next show that $\gcd(m,(p-\xi)/m)=1$. If $p_i$ divides $(p-\xi)/m$ for some $i$, then $p_im$ divides $p-\xi$, which implies that $p_i^{a_i+1}$ divides $p-\xi$. But $p-\xi \equiv A-\xi \equiv p_i^{a_i} \nequiv 0 \pmod{p_i^{a_i+1}}$, which is a contradiction. Hence $m$ is coprime to $(p-\xi)/m$, as desired.
\end{proof}

With the preceding number-theoretic lemma at our disposal, we now prove an auxiliary result that underpins our subsequent arguments involving the projective special linear group $\operatorname{PSL}_2(p)$. More precisely, we show that for any integer $m \ge 2$ of certain type, there exists a prime $p > 11$ such that $m$ is a Hall divisor of $|\mathrm{PSL}_2(p)|$.

\begin{lem}\label{lem:m odd or 4k}
Let $m \geq 2$ be a positive integer.
Then there exist an odd prime $p>11$ such that $m$ is a Hall divisor of $p-1$. Furthermore if $m$ is odd or $4$ divides $m$, then $m$ is also a Hall divisor of $ p(p-1)(p+1)/2$.
\end{lem}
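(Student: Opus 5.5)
We apply Lemma~\ref{lem:Hall-divisor} with $\xi=1$, which is coprime to $m$. It gives infinitely many primes $p$ with $p\equiv 1 \pmod m$ and $\gcd\bigl(m,(p-1)/m\bigr)=1$. Choosing one with $p>11$ gives an odd prime $p>11$ such that $m$ divides $p-1$ and $\gcd(m,(p-1)/m)=1$. This is exactly the statement that $m$ is a Hall divisor of $p-1$.

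For the second assertion, set $n=p(p-1)(p+1)/2$ and write $p-1=mr$ with $\gcd(m,r)=1$. Since $p\equiv 1\pmod m$, we have $p\equiv 1$ and $p+1\equiv 2$ modulo every prime divisor $q$ of $m$. Hence every odd prime divisor $q$ of $m$ is coprime to $p$ and to $p+1$. The quotient is
\[
\frac{n}{m}=\frac{p\,r\,(p+1)}{2},
\]
and we need to show it is coprime to $m$. We check this prime by prime for each $q\mid m$.

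\textbf{Case $m$ odd.} Then $2\nmid m$, so $n/m$ is an integer because $m\mid p-1$ and $\gcd(m,2)=1$. For an odd prime $q\mid m$, we have $q\nmid r$ since $\gcd(m,r)=1$. We also have $q\nmid p$ and $q\nmid p+1$. So $q\nmid p r(p+1)$, and therefore $q\nmid n/m$.

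\textbf{Case $4\mid m$.} Odd primes $q\mid m$ are handled exactly as above. For $q=2$, we have $2\mid m$, so $r$ is odd. Also $4\mid p-1$, so $p+1\equiv 2\pmod 4$, which means $(p+1)/2$ is odd. Then $n/m=p\,r\,(p+1)/2$ is a product of odd numbers and is odd. Thus $\gcd(m,n/m)=1$ in both cases.

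There is no real obstacle here. The only delicate point is the prime $2$ when $4\mid m$: we need $p\equiv 1\pmod 4$ so that the factor $2$ in the denominator is absorbed by $p+1$ rather than by $p-1$. This is also what fails when $m\equiv 2\pmod 4$: in that case $p\equiv 3\pmod 4$, so $4\mid p+1$ and the prime $2$ divides $n/m$.
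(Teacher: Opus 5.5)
Your proof is correct and takes essentially the same route as the paper. You apply Lemma~\ref{lem:Hall-divisor} with $\xi=1$, then note that odd primes dividing $m$ cannot divide $p(p+1)/2$, and that $(p+1)/2$ is odd when $4\mid m$; the paper packages this as $\gcd\bigl(p-1,p(p+1)/2\bigr)\mid 2$ rather than arguing prime by prime, but the content is identical.
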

\begin{proof}
Applying Lemma~\ref{lem:Hall-divisor} for $\xi=1$, there exists an odd prime  $p>11$ such that $m$ is a Hall divisor of $p-1$. Therefore we only need to prove the latter part. Note that
$\gcd(p-1, p(p+1)/2)$ divides 
$$\gcd(p-1,p(p+1))=\gcd(p-1,p+1)=\gcd(p-1,2)=2.$$
Hence $\gcd(p-1,p(p+1)/2)=1~\text{or}~2$. If $m$ is odd, as $m$ divides $p-1$, then
$\gcd(m, p(p+1)/2)=1$. Hence $m$ is a Hall divisor of $p(p-1)(p+1)/2$ as $m$ is a Hall divisor of $p-1$. 

Now we assume $4$ divides $m$. In this case, we have $4 \mid p-1$ and set $p-1=4k$ for some positive integer $k$. Then $p+1=4k+2=2(2k+1)$ and $(p+1)/2=2k+1$ is odd. Hence $\gcd(p-1, p(p+1)/2)=1$ and so $\gcd(m,p(p+1)/2)=1$. Hence $m$ is a Hall divisor of $p(p-1)(p+1)/2$, as desired. 
\end{proof}

Now, we are ready to prove Theorem \ref{thmA}.

\begin{proof}[\emph{Proof of Theorem \ref{thmA}}]
Clearly $1$ is a Hall complement number and
$m=4k+2$ for a non-negative 
integer $k$ is also a Hall complement number by Lemma \ref{lem: 4k+2}. Hence we only prove the necessity of this result.

We assume that $m$ is a Hall complement number such that $m$ is neither 1 nor $4k+2$ for every non-negative integer $k$. Hence we obtain that
$m \geq 2$ such that $m$ is odd or $4$ divides $m$.
By Lemma~\ref{lem:m odd or 4k}, there exists a prime $p>11$ such that $m$ is a Hall divisor of $p-1$ and $m$ is also a Hall divisor of $p(p-1)(p+1)/2$.

Let $G=\PSL_2(p)$ and clearly $|G|=p(p-1)(p+1)/2$.
Since $m$ is a Hall divisor of $|G|$, by the definition of Hall complement number, $G$ has a subgroup of order $|G|/m=p(p+1)(p-1)/m$, say $H$. As $m \geq 2$ and $p$ divides $|H|$, $H$ is a proper group of $G$ containing a Sylow $p$-subgroup of $G$. By Dickson's classification of the subgroups of $\mathrm{PSL}_2(p)$ (\cite[II, 8.27 Hauptsatz]{huppert67}), since $p > 11$, any proper subgroup of $G$ containing a Sylow $p$-subgroup must be contained in a Borel subgroup of order $p(p-1)/2$, which forces $|H|$ to divide $p(p-1)/2$.
Consequently, the ratio 
$$\frac{p(p-1)}{2|H|} = \frac{m}{p+1}$$ must be an integer, implying that $p+1$ divides $m$. This forces $p+1 \leq m$.
  However, since $m$ divides $p-1$, we must have $m \leq p-1$, leading to a clear contradiction $p+1 \leq p-1$.
\end{proof}

\begin{acknowledgement}
   The author is grateful to the referee for her/his valuable comments.
\end{acknowledgement}


\begin{thebibliography}{ABCDEF}\setlength{\itemsep}{-2mm} 
\small


\bibitem[GHL25]{guo25}
Z. Guo, Y. Hu and C.H. Li,
\newblock The exceptional Hall numbers,
\newblock \emph{Peking Math. J.} {\bf } (2025).  


   \bibitem[HW08]{hardy08}
   G. Hardy and E. Wright,
   \newblock \emph{An introduction to the theory of numbers}, sixth edition,
   \newblock Oxford University Press, Oxford, 2008.



   \bibitem[Hup67]{huppert67}
   B. Huppert,
   \newblock \emph{Endliche Gruppen I},
   \newblock Springer-Verlag, Berlin, 1967.



\end{thebibliography}
\end{document}